\documentclass[12pt, reqno,fleqn]{amsart}
\allowdisplaybreaks[1]
\usepackage{amsmath}
\usepackage{amssymb}
\usepackage{amsfonts}
\usepackage{verbatim}
\usepackage[usenames]{color}
\usepackage{hyperref}
\makeindex

 \newtheorem{theorem}{Theorem}[section]
 \newtheorem{corollary}[theorem]{Corollary}

\newtheorem{claim}[theorem]{Claim}
\theoremstyle{definition}

\theoremstyle{remark}

\newtheorem*{claim*}{Claim}
\newtheorem{fact*}{Fact}


\newcommand{\til}{\raise.17ex\hbox{$\scriptstyle\mathtt{\sim}$}}

\newcommand\beq{\begin{equation}}

\newcommand\eeq{\end{equation}}

\newcommand{\bbm}{\left[ \begin{smallmatrix}}
\newcommand{\ebm}{\end{smallmatrix} \right]}
\newcommand{\bpm}{\left( \begin{smallmatrix}}
\newcommand{\epm}{\end{smallmatrix} \right)}
\numberwithin{equation}{section}

\newlength{\Mheight}
\newlength{\cwidth}

\def\McCarthy{M\raise.45ex\hbox{c}Carthy }
\def\McCarthyc{M\raise.45ex\hbox{c}Carthy, }

\title[Note on L\"owner's theorem]{Note on L\"owner's theorem on matrix monotone functions in several commuting variables of Agler, McCarthy and Young}
\author{
J. E. Pascoe
}
\thanks{
Partially supported by National Science Foundation Grant DMS 1361720}
\date{\today}

\subjclass[2010]{Primary	47A63  Secondary   	32A40}

\setcounter{tocdepth}{4}


\begin{document}

\begin{abstract}
In this brief note,
we show that the hypotheses of L\"owner's theorem on matrix monotonicity in several commuting variables as proved by
Agler, \McCarthy and Young
can be significantly relaxed. Specifically, we extend their theorem from continuously differentiable locally
matrix monotone functions to arbitrary locally matrix monotone functions using mollification techniques.
\end{abstract}
\maketitle


\section{Introduction}

A function $f: (a,b) \rightarrow \mathbb{R}$ is \emph{matrix monotone} if
	$$A \leq B \Rightarrow f(A) \leq f(B)$$
for all $A, B$ self adjoint matrices with spectrum in $(a,b),$ where $A\leq B$ means $B-A$ is positive semidefinite.
In 1934\cite{lo34}, Charles L\"owner showed that if $f: (a,b) \rightarrow \mathbb{R}$ is matrix monotone, then $f$ analytically continues
to the upper half plane $\Pi \subset \mathbb{C}$ as a map $f: \Pi \cup (a,b) \rightarrow \overline{\Pi}.$

Agler, \McCarthy and Young extended L\"owner's theorem to several commuting
variables for the class of locally matrix monotone functions.
Let $E$ be an open subset of $\mathbb{R}^d.$
Let $CSAM^d_n(E)$ denote the $d$-tuples of commuting self-adjoint matrices of size $n$ with joint spectrum contained in $E$. (That is,
if you jointly diagonalize an element of $CSAM^d_n(E)$ it should look like a direct sum of elements of $E$.)
A \emph{locally matrix monotone function} is a function
$f: E \rightarrow \mathbb{R}$ so that for every $n,$ on every $C^1$ curve $\gamma: [0,1] \rightarrow CSAM^d_n(E)$ such that 
$\gamma'(t)_i \geq 0$ for all $i$ and all $t \in [0,1],$
	\beq\label{monotoneCondition} t_1 \leq t_2 \Rightarrow f(\gamma(t_1)) \leq f(\gamma(t_2)).\eeq

We generalize the following result.
\begin{theorem}[Agler, \McCarthyc Young \cite{amyloew}]\label{amyloewthm}
	Let $E$ be an open subset of $\mathbb{R}^d.$ A $C^1$ function $f: E \rightarrow \mathbb{R}$ is locally matrix monotone
	if and only if $f$ is analytic and $f$ analytically continues to $\Pi^d$
	as a map $f: E \cup \Pi^d \rightarrow \overline{\Pi}.$
\end{theorem}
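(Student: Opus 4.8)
The natural plan is to prove the two implications separately, following the template of the classical one-variable L\"owner theorem.

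\emph{The ``if'' direction.} Suppose $f$ continues analytically to a map $f: E\cup\Pi^d\to\overline{\Pi}$. I would first invoke the several-variable Nevanlinna-type realization of such functions, the $d$-variable analogue of $f(z)=a+bz+\int\frac{1+tz}{t-z}\,d\mu(t)$: namely that $f$ can be written as $a+\sum_j b_jz_j$ plus a resolvent-type expression in the coordinate functions $z_1,\dots,z_d$ that is manifestly holomorphic on $\Pi^d$, has nonnegative imaginary part there, and extends continuously to the real points of $E$. Given a $C^1$ curve $\gamma:[0,1]\to CSAM^d_n(E)$ with each $\gamma'(t)_i\geq 0$, I would differentiate $t\mapsto f(\gamma(t))$ via a Daleckii--Krein-type formula: in a joint eigenbasis of $\gamma(t_0)$ the matrix $\tfrac{d}{dt}f(\gamma(t))$ at $t_0$ is a sum over the coordinates $j$ of Hadamard products of the $j$-th divided-difference (L\"owner) matrix of $f$ at the joint eigenvalues with the positive semidefinite matrix $\gamma_j'(t_0)$ written in that basis. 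The realization exhibits each such L\"owner matrix as positive semidefinite, hence so is each Hadamard product, hence $\tfrac{d}{dt}f(\gamma(t))\geq 0$, and \eqref{monotoneCondition} follows.

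\emph{The ``only if'' direction.} This is the substantive half. One might first hope to fix all but one real variable, deduce ordinary matrix monotonicity in the remaining one, and quote the one-variable L\"owner theorem; but that gives analyticity only variablewise and only near real values of the frozen coordinates, and says nothing about joint behavior on $\Pi^d$, so a genuine $d$-variable construction is unavoidable. I would organize it in three steps. (1) \emph{Pointwise positivity.} Running \eqref{monotoneCondition} over all $C^1$ curves through a fixed point and using the $C^1$ hypothesis to run the Daleckii--Krein computation in reverse, one extracts that a canonically defined family of several-variable divided-difference matrices of $f$ is positive semidefinite on every finite subset of a small ball in $E$; assigning a meaning to the several-variable divided difference is precisely where $C^1$ regularity enters. (2) \emph{Hilbert-space model.} From the positivity of this matricial kernel I would run a lurking-isometry / GNS construction, producing a Hilbert space split as a direct sum of $d$ coordinate pieces together with a map $x\mapsto u_x$ on the ball whose coordinatewise inner products recover the divided differences of $f$. (3) \emph{Extraction of the continuation.} The model supplies an explicit formula for $f$ of the same resolvent-in-the-coordinates shape as in the ``if'' direction; feeding in arguments $z\in\Pi^d$ one checks that this formula is holomorphic on $\Pi^d$, maps into $\overline{\Pi}$, restricts to $f$ on $E$, and is continuous up to $E$ --- and real-analyticity of $f$ on $E$ drops out of the same formula. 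Because \eqref{monotoneCondition} is only assumed locally, (1)--(3) are carried out on small balls and the local continuations must then be glued; uniqueness of analytic continuation on the connected set $\Pi^d$ forces consistency on overlaps, but one must still check that the glued function is globally $\overline{\Pi}$-valued.

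\emph{The main obstacle.} I expect the crux to be steps (1)--(2): pinning down the right notion of several-variable divided difference, showing that the one-parameter family of monotonicity statements along curves condenses into a \emph{single} positive semidefinite kernel on $E\times E$ that simultaneously controls every coordinate direction, and then that this kernel yields an \emph{honest} analytic continuation with the correct boundary behavior rather than a merely formal series. The absence of convexity (and the possible multiple connectivity) of $E$ is a secondary nuisance, forcing the patching of local models while preserving $\overline{\Pi}$-valuedness.
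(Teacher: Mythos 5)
First, a point of order: this note does not prove Theorem \ref{amyloewthm} at all. It is imported verbatim from \cite{amyloew} and used as a black box; the paper's own contribution is Theorem \ref{mainresult}, the removal of the $C^1$ hypothesis by mollification. So there is no internal proof to measure you against, only the original Agler--\McCarthy--Young argument. Measured against that, your outline does identify the correct architecture: the ``if'' direction via a realization/model formula for $\overline{\Pi}$-valued functions on $\Pi^d$ together with a Daleckii--Krein-type differentiation along curves in $CSAM^d_n(E)$, and the ``only if'' direction via positivity of a several-variable divided-difference kernel, a lurking-isometry/GNS construction of a Hilbert-space model, and extraction of the continuation from the model formula. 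This is genuinely the shape of the AMY proof.

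However, what you have written is a plan, not a proof. The steps you yourself flag as ``the crux'' --- (1) and (2) --- are precisely where all of the mathematical content lives, and they are named rather than executed. Concretely: (i) you never define the several-variable divided difference, and this is not a formality --- for points of $E$ that differ in more than one coordinate the divided difference depends on the order in which one differences the coordinates, and choosing a coherent family of such kernels whose positivity is both implied by curve-wise monotonicity and strong enough to run a lurking isometry is the hard part of \cite{amyloew}; (ii) the several-variable Daleckii--Krein formula you invoke in \emph{both} directions needs proof: the joint eigenvalues of $\gamma(t)$ need not admit a smooth (or even continuous, with multiplicity) labelling, and the decomposition of $\tfrac{d}{dt}f(\gamma(t))$ into Hadamard products of coordinatewise L\"owner matrices with the $\gamma_j'(t_0)$ is exactly the order-of-differencing issue from (i) resurfacing; (iii) in the ``only if'' direction you assert that the model formula ``is holomorphic on $\Pi^d$, maps into $\overline{\Pi}$, restricts to $f$, and is continuous up to $E$'' without indicating why --- in one variable this is the passage from the Nevanlinna representation back to the Pick class, but in several variables the corresponding step requires the operator-valued realization machinery and is not automatic. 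Until (i)--(iii) are supplied, the argument has a genuine gap at its center rather than at its periphery.
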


We show that the assumption that $f$ is $C^1$ can be dropped 
as in L\"owner's theorem for one variable. That is, we prove the following theorem.
\begin{theorem}\label{mainresult}
	Let $E$ be an open subset of $\mathbb{R}^d.$
	Let $f: E \rightarrow \mathbb{R}$ be a locally matrix monotone function, then $f$ is analytic.	
\end{theorem}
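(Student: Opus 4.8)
The plan is to deduce Theorem~\ref{mainresult} from the $C^1$ case, Theorem~\ref{amyloewthm}, by mollification, the crucial point being that mollification preserves local matrix monotonicity. First I would record some cheap regularity of $f$. Taking $n=1$ in the definition (so that $CSAM^d_1(E)=E$ and the admissible curves are exactly the coordinatewise nondecreasing paths), condition~\eqref{monotoneCondition} says $f$ is nondecreasing in each variable separately; in particular $f$ is bounded on every closed box contained in $E$, so $f\in L^1_{\mathrm{loc}}(E)$. Restricting~\eqref{monotoneCondition} instead to curves $\gamma(t)=(x_1I,\dots,x_{i-1}I,A(t),x_{i+1}I,\dots,x_dI)$ shows that every single-variable restriction of $f$ is matrix monotone of every order, hence analytic by L\"owner's one-variable theorem; together with separate monotonicity, a short elementary sandwich argument then shows $f$ is jointly continuous. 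So $f$ is a continuous function in $L^1_{\mathrm{loc}}(E)$.

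Now fix a standard mollifier $\varphi\in C_c^\infty(\mathbb{R}^d)$ with $\varphi\ge0$, $\int\varphi=1$ and support in the unit ball, put $\varphi_\eps(y)=\eps^{-d}\varphi(y/\eps)$ and $E_\eps=\{x\in E:\dist(x,\mathbb{R}^d\setminus E)>\eps\}$, and set $f_\eps=f*\varphi_\eps\in C^\infty(E_\eps)$. The key step is that $f_\eps$ is locally matrix monotone on $E_\eps$. For a function $g:E\to\mathbb{R}$ and $A\in CSAM^d_n(E)$ with joint eigenvalues $\lambda^{(1)},\dots,\lambda^{(n)}$ and joint spectral projections $P_1,\dots,P_n$, the value $g(A)=\sum_j g(\lambda^{(j)})P_j$ depends on $g$ only through the numbers $g(\lambda^{(j)})$; hence, writing $A-y:=(A_1-y_1I,\dots,A_d-y_dI)$ for the commuting self-adjoint tuple with joint eigenvalues $\lambda^{(j)}-y$, one gets the identity
$$f_\eps(A)=\int_{\mathbb{R}^d}f(A-y)\,\varphi_\eps(y)\,dy\qquad (A\in CSAM^d_n(E_\eps)).$$
Given an admissible $C^1$ curve $\gamma$ in $CSAM^d_n(E_\eps)$ and $|y|<\eps$, the shifted curve $\gamma(\cdot)-y$ is again an admissible $C^1$ curve in $CSAM^d_n(E)$: its derivative is unchanged, and its joint eigenvalues lie within distance $\eps$ of those of $\gamma(\cdot)$, hence in $E$. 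Local matrix monotonicity of $f$ therefore gives $f(\gamma(t_1)-y)\le f(\gamma(t_2)-y)$ for $t_1\le t_2$, and integrating this inequality against the nonnegative weight $\varphi_\eps(y)\,dy$ yields $f_\eps(\gamma(t_1))\le f_\eps(\gamma(t_2))$. Thus $f_\eps$ is a $C^\infty$ locally matrix monotone function on the open set $E_\eps$, so Theorem~\ref{amyloewthm} applies: $f_\eps$ is real analytic on $E_\eps$ and continues analytically to $\Pi^d$ as a map $F_\eps:E_\eps\cup\Pi^d\to\overline{\Pi}$.

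It remains to pass to the limit $\eps\to0$. Fix a compact $K\subset E$; then $K\subset E_\eps$ for small $\eps$, and the $f_\eps$ are uniformly bounded on $K$ (being averages of translates of $f$, which is bounded on a neighborhood of $K$). The $F_\eps$ are holomorphic $\Pi^d\to\overline{\Pi}$, so after a Cayley transform they form a uniformly bounded, hence normal, family of holomorphic maps into $\mathbb{D}$. Since in addition each $F_\eps$ is real analytic and real valued on $E_\eps$, it reflects across $E_\eps$; combining the reflection with a multivariable edge-of-the-wedge argument (equivalently, with a Herglotz--Nevanlinna-type integral representation of the extension), and using the uniform bound on $K$, I would produce a complex neighborhood $N$ of $K$ in $\mathbb{C}^d$, \emph{independent of $\eps$}, on which the $F_\eps$ are holomorphic and uniformly bounded. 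Montel's theorem then gives a subsequence $F_{\eps_k}\to F$ locally uniformly on $N$ with $F$ holomorphic, and on $N\cap\mathbb{R}^d$ we get $F=\lim_k f_{\eps_k}=f$ (using $f_\eps\to f$ locally uniformly, by continuity of $f$; this also excludes the degenerate limits where $F_\eps$ tends to a constant or to $\infty$, unless $f$ is constant, in which case there is nothing to prove). As $K$ was arbitrary, $f$ is real analytic on $E$.

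I expect this last step to be the main obstacle. Everything up to it is elementary once one notices that convolution commutes with the functional calculus on commuting tuples and hence preserves local matrix monotonicity; but ensuring that the analytic continuations $F_\eps$ do not shrink back onto $E$ as $\eps\to0$ — i.e.\ obtaining an $\eps$-uniform complex neighborhood and bound — is where genuine several-variable complex analysis enters (normality of Herglotz families on $\Pi^d$, Schwarz reflection across the real boundary, and edge-of-the-wedge or an explicit Nevanlinna-type representation to control the mixed imaginary octants).
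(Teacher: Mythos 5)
Your mollification step is correct and is exactly the paper's: the identity $f_\eps(A)=\int f(A-y)\varphi_\eps(y)\,dy$ together with the observation that translates of admissible curves remain admissible curves is the paper's Claim \ref{mollifyMonotone}, and your continuity argument for $f$ (one-variable L\"owner plus a sandwich) is also the one used there. The gap is the step you yourself flag as the main obstacle: producing an $\eps$-independent complex neighborhood of $K$ on which the $F_\eps$ are holomorphic and uniformly bounded. As written this is not a proof. Normality of $\psi\circ F_\eps$ as maps into $\overline{\mathbb D}$ (for a Cayley transform $\psi$) only yields subsequential limits that may be unimodular constants, i.e.\ $F_{\eps_k}\to\infty$ locally uniformly on $\Pi^d$, and pointwise (even locally uniform) convergence of $f_\eps$ to $f$ on the real set does not by itself rule this out or control $F_\eps$ on $\Pi^d$: in one variable the Nevanlinna representation converts bounded difference quotients at two real points into bounds on all of $\Pi$, but there is no off-the-shelf several-variable analogue, and the edge-of-the-wedge theorem, while it does furnish a neighborhood depending only on the geometry, still presupposes uniform bounds on the wedge sides that you have not established.

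The paper closes exactly this gap with two ingredients absent from your proposal. First, a quantitative refinement of Theorem \ref{amyloewthm} from the author's earlier work (Theorem 4.4 of \cite{pascoeLOMC}): a differentiable locally matrix monotone $g$ satisfies $|g^{(k)}(p)[\vec{z}]|/k!\le q^p_k\|\vec{z}\|^k\, g'(p)[\vec{1}]$ with $q^p_k=O(c^k)$, so a bound on the single number $g'(p)[\vec{1}]$ yields an analytic continuation to a complex ball of fixed radius about $p$ with an explicit bound there. This reduces normality of $(f_t)$ near any point to showing $\limsup_{t\to0}f_t'(a_0)[\vec{1}]<\infty$ at just one point $a_0$ of each small ball. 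Second, such a point is produced measure-theoretically: the difference-quotient measures $\epsilon^{-1}\bigl(f(x+\epsilon\vec{1})-f(x)\bigr)\,d\mu$ have uniformly bounded total variation by monotonicity, hence admit a weak-$*$ limit $\rho$; since $f_t'(\cdot)[\vec{1}]=\phi_t*\rho$, any point where $\rho$ has finite density with respect to Lebesgue measure (almost every point, by the differentiation theorem) serves as $a_0$. To complete your argument you must either supply these quantitative inputs or find a genuine substitute; the reflection/edge-of-the-wedge route would still need a uniform bound somewhere on $\Pi^d$, which is precisely what the $\rho$ argument provides.
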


So as an immediate corollary, we obtain the following.
\begin{corollary}
	Let $E$ be an open subset of $\mathbb{R}^d.$ A function $f: E \rightarrow \mathbb{R}$ is locally matrix monotone
	if and only if $f$ is analytic and $f$ analytically continues to $\Pi^d$
	as a map $f: E \cup \Pi^d \rightarrow \overline{\Pi}.$
\end{corollary}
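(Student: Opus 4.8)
The corollary follows at once from Theorem~\ref{mainresult}: its forward implication combines the statement that a locally matrix monotone $f$ is analytic, hence in particular $C^1$, with Theorem~\ref{amyloewthm}, and its reverse implication is the easy half of Theorem~\ref{amyloewthm}. So the content to prove is Theorem~\ref{mainresult}, which I would deduce from Theorem~\ref{amyloewthm} by \emph{mollification}. First, two soft preliminaries. Applying \eqref{monotoneCondition} with $n=1$ along the coordinate curves $t\mapsto x+te_i$ shows that $f$ is nondecreasing in each variable separately, and sandwiching $f$ between its values at the two extreme corners of a closed box contained in $E$ then shows that $f$ is locally bounded. Moreover $f$ is continuous: fixing $x_0\in E$ and writing $\mathbf 1=(1,\dots,1)$, for self-adjoint $A\le A'$ with spectra in a small interval about $0$ consider $\gamma(t)=\bigl((x_0)_1I+(1-t)A+tA',\dots,(x_0)_dI+(1-t)A+tA'\bigr)$; for each fixed $t$ the entries commute (each is a function of the single matrix $(1-t)A+tA'$), $\gamma'(t)_i=A'-A\ge 0$, and the joint functional calculus gives $f(\gamma(0))=g(A)$ and $f(\gamma(1))=g(A')$ where $g(s):=f(x_0+s\mathbf 1)$. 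Hence $g$ is operator monotone on an interval, so continuous by the classical one-variable theory, and the monotonicity sandwich $g(-s)\le f(z)\le g(s)$ for $|z-x_0|_\infty\le s$ upgrades this to continuity of $f$ at $x_0$. In particular $f\in L^1_{\mathrm{loc}}(E)$ and $f_\varepsilon:=f*\phi_\varepsilon\to f$ locally uniformly, for a standard mollifier $\phi_\varepsilon$ supported in the $\varepsilon$-ball.

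The \emph{key step} is that mollification preserves local matrix monotonicity. On $E_\varepsilon:=\{x\in E:\dist(x,\R^d\setminus E)>\varepsilon\}$ the function $f_\varepsilon$ is $C^\infty$, and for $|v|\le\varepsilon$ the translate $x\mapsto f(x-v)$ is locally matrix monotone on $E+v\supseteq E_\varepsilon$: translating a curve $\gamma$ in $CSAM^d_n(E+v)$ with $\gamma'(t)_i\ge 0$ by the scalar tuple $(v_1I,\dots,v_dI)$ gives such a curve in $CSAM^d_n(E)$, along which the translate of $f$ agrees with $f$. Writing $A=\sum_k\lambda_kP_k$ for the joint spectral decomposition of $A\in CSAM^d_n(E_\varepsilon)$, one gets $f_\varepsilon(A)=\sum_k f_\varepsilon(\lambda_k)P_k=\int f(A-vI)\,\phi_\varepsilon(v)\,dv$ (expanding each $f_\varepsilon(\lambda_k)$ as a convolution integral and pulling the finite sum inside), a positive average of translates of $f$; hence for any curve $\gamma$ in $CSAM^d_n(E_\varepsilon)$ with $\gamma'(t)_i\ge 0$ and any $t_1\le t_2$, $f_\varepsilon(\gamma(t_2))-f_\varepsilon(\gamma(t_1))=\int\bigl(f(\gamma(t_2)-vI)-f(\gamma(t_1)-vI)\bigr)\phi_\varepsilon(v)\,dv\succeq 0$, since each integrand is positive semidefinite and $\phi_\varepsilon\ge 0$. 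So $f_\varepsilon$ is $C^1$ and locally matrix monotone on $E_\varepsilon$, and Theorem~\ref{amyloewthm} applies: $f_\varepsilon$ is analytic on $E_\varepsilon$ and extends to a continuous map $E_\varepsilon\cup\Pi^d\to\overline\Pi$ that is holomorphic on $\Pi^d$.

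It then remains to let $\varepsilon\to 0$. Post-composing with the Cayley transform $m(w)=(w-\ii)/(w+\ii)$, which sends $\overline\Pi$ into $\overline{\D}$, the functions $m\circ f_\varepsilon$ are holomorphic on the fixed domain $\Pi^d$ and bounded by $1$, so some subsequence converges locally uniformly on $\Pi^d$. The step I expect to be the main obstacle is to control the $f_\varepsilon$ near the real boundary uniformly in $\varepsilon$, so that the limit is still continuous up to $E$ with the correct boundary values. This is an interior estimate for the Loewner class: a Herglotz function on $\Pi^d$ that is real and bounded by $M$ on a ball $B(x_0,\delta)\subseteq\R^d$ is bounded by a constant $C(d,M,\delta)$ on a truncated tube over $B(x_0,\delta/2)$; in one variable this is a short computation with the Nevanlinna representation (whose measure puts no mass in $B(x_0,\delta)$), and the several-variable case goes along the same lines. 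Since $\|f_\varepsilon\|_{L^\infty}$ is locally bounded uniformly in $\varepsilon$ by the first paragraph, this holds uniformly in $\varepsilon$, and combined with $f_\varepsilon\to f$ locally uniformly on $E$ it produces a holomorphic $F:\Pi^d\to\overline\Pi$, continuous up to $E$, with $F|_E=f$; that is, $f$ itself extends to a continuous map $E\cup\Pi^d\to\overline\Pi$ holomorphic on $\Pi^d$.

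Finally I would strip off the half-plane. Because $F$ is real on $E$, Schwarz reflection across $\R^d$ extends $F$ holomorphically to $(-\Pi)^d$ with boundary values on $E$ matching those of $F$ from above, and the edge-of-the-wedge theorem, applied with the opposite open convex cones $\R^d_{>0}$ and $\R^d_{<0}$ so that the two wedges are $\Pi^d$ and $(-\Pi)^d$ with common edge $E$, extends $F$ to a holomorphic function on an open neighbourhood of $E$ in $\C^d$. Restricting this extension to $E$ shows that $f$ is analytic, which is Theorem~\ref{mainresult} and hence the corollary.
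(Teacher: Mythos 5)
Your reduction of the corollary to Theorem \ref{mainresult} is exactly the paper's: the corollary is stated there as an immediate consequence of Theorems \ref{amyloewthm} and \ref{mainresult}, so the real content is your proof of Theorem \ref{mainresult}. Your first two paragraphs track the paper closely --- continuity of $f$ via the classical one-variable L\"owner theorem on the diagonal plus the monotonicity sandwich, and the fact that mollification preserves local matrix monotonicity by writing $f_\varepsilon(\gamma(t_2))-f_\varepsilon(\gamma(t_1))$ as a positive average of differences of translates --- and these are Claim \ref{mollifyMonotone} and the discussion preceding it in essentially the same form.

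The divergence, and the gap, is in how you pass to the limit $\varepsilon\to 0$. You propose Montel on $\Pi^d$ after a Cayley transform and correctly identify that everything then hinges on controlling $f_\varepsilon$ near the real edge uniformly in $\varepsilon$; but the ``interior estimate for the Loewner class'' you invoke is precisely the hard analytic content, and it does not go ``along the same lines'' as the one-variable Nevanlinna computation. In several variables there is no scalar Nevanlinna representation whose representing measure you can localize away from a real ball (the slicing $z\mapsto F(z_1,x_2,\dots,x_d)$ is not even in the domain $E\cup\Pi^d$), and extracting locally uniform bounds near a real set where the function is real and bounded is exactly the kind of statement whose proof constitutes the quantitative part of this theory. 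Moreover, even granting equiboundedness on truncated tubes, you would still need equicontinuity up to the edge, or a uniform complex neighborhood of continuation, to identify the boundary values of the Montel limit with $f$. The paper fills this hole differently and stays entirely on the real side: it imports a quantitative derivative bound for differentiable locally matrix monotone functions (Theorem \ref{oldresult1}), which gives each $f_t$ an analytic continuation to a complex ball of uniform radius with bounds controlled by $f_t'(a_0)[\vec{1}]$; normality of $\{f_t\}$ then reduces to finding, near any given point, some $a_0$ with $\limsup_{t\to 0} f_t'(a_0)[\vec{1}]<\infty$, which is done by taking the weak-$*$ limit $\rho$ of the difference-quotient measures and choosing a point where $\rho$ has finite density with respect to Lebesgue measure. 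Since $f_t\to f$ pointwise, the normal limit is $f$ and analyticity follows with no appeal to edge-of-the-wedge. Unless you can actually prove your several-variable interior estimate, your argument is incomplete at the step you yourself flagged.
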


\section{Proof of the result}
	We fix the convention that for $a, b \in \mathbb{R}^d,$
$a< b$ means that $b_i-a_i > 0$ for all $i$  and $a\leq  b$ means that $b_i-a_i \geq 0.$
Furthermore for $a\in \mathbb{R}^d$ we define $\|a\| = \|a\|_{\infty} = \sup_i |a_i|.$
With the ordering induced by $\leq,$ locally matrix monotone
functions define on a ball with respect to the above norm are monotone in the more
conventional sense that
	\beq \label{monoone}
	a \leq b \Rightarrow f(a) \leq f(b).\eeq

To prove Theorem \ref{mainresult} we mollify a locally matrix monotone function $f$ to get a smooth matrix monotone function
and use the analytic continuations of those from Theorem \ref{amyloewthm} to derive an analytic continuation for $f$ itself. 

	Let $E$ be an open subset of $\mathbb{R}^d.$
	Fix $f: E \rightarrow \mathbb{R}$ be a locally matrix monotone function.
	Fix $K$ an open set such that $\overline{K}$ is compact and $\overline{K} \subset E$. Note that it is sufficient
	to prove that $f$ is analytic on each such $K$ since analyticity is a local property. Let $\mu$ be Lebesgue measure.
	Let $\phi: \mathbb{R}^d\rightarrow \mathbb{R}$ be a nonnegative smooth function with compact support so that
	$\int_{\mathbb{R}^d} \phi(x) d\mu(x) = 1.$
	Let $\phi_t(x) = t^{-d}\phi(t^{-1}x).$
	Let $f_t = \phi_t * f$ where $f$ is formally extended to be $0$ off $E.$
	Let $\vec{1} = (1,1,\ldots, 1) \in \mathbb{R}^d.$

	We note $f_t$ is well-defined since
	$f$ is continuous, because, for any $x\in E$, the function $f(x + \epsilon\vec{1})$
	is continuous as a function of $\epsilon$
	by the classical L\"owner theorem and, for all $y\in E$ such that
	$\|x-y\|<\epsilon,$
		$$f(x - \epsilon\vec{1})\leq f(y) \leq f(x+\epsilon \vec{1}).$$
	Furthermore, we note $f_t \rightarrow f$ pointwise since $f$ is continuous.
	
Now, we give criteria for the mollification $f_t$ of a locally matrix monotone function to be locally matrix monotone.
\begin{claim}\label{mollifyMonotone}
	The function $f_t$ is locally matrix monotone on $K$ for sufficiently small $t.$
\end{claim}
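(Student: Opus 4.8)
The plan is to realise $f_t$, restricted to $K$, as a $\phi$-weighted average of translates of $f$ — each of which is itself locally matrix monotone on a neighbourhood of $\overline K$ — and then to observe that the inequality \eqref{monotoneCondition} survives this averaging, since it is a pointwise inequality in the positive semidefinite order and is linear in $f$.

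First I would pin down the admissible values of $t$. Because $\overline K$ is compact and contained in the open set $E$, there is $\delta>0$ with the closed $\delta$-neighbourhood of $\overline K$ contained in $E$; and because $\phi$ has compact support there is $R>0$ with $\operatorname{supp}\phi\subseteq\{z:\|z\|\le R\}$. I claim every $t$ with $tR<\delta$ works. For such $t$, the change of variables $y=tz$ gives, for $x\in K$,
\[
f_t(x)=\int_{\mathbb R^d}\phi(z)\,f(x-tz)\,d\mu(z),
\]
and every argument $x-tz$ occurring here lies within $\delta$ of $\overline K$, hence in $E$; in particular the formal extension of $f$ by $0$ is irrelevant on $K$, and $f_t|_K$ is a genuine $\phi$-average of values $f$ takes inside $E$.

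Next comes the heart of the argument. Fix $n$ and a $C^1$ curve $\gamma:[0,1]\to CSAM^d_n(K)$ with $\gamma'(s)_i\ge 0$ for all $i$ and $s$, and for each $z\in\operatorname{supp}\phi$ set $\gamma_z(s):=\gamma(s)-tz$, meaning $tz_iI$ is subtracted in the $i$-th coordinate. Then $\gamma_z$ is again $C^1$; its joint spectrum is that of $\gamma(s)$ translated by $-tz$, hence lies in $E$ by the previous paragraph, so $\gamma_z$ is a curve in $CSAM^d_n(E)$; and $\gamma_z'(s)=\gamma'(s)$, so the coordinatewise positivity of the derivative is preserved. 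Local matrix monotonicity of $f$ on $E$ then gives $f(\gamma_z(s_1))\le f(\gamma_z(s_2))$ whenever $s_1\le s_2$. Since the matrices have fixed size $n$, the tuple $\gamma(s)$ is simultaneously diagonalisable, with joint eigenvalues $\lambda^{(j)}(s)\in\mathbb R^d$ and orthogonal spectral projections $P_j(s)$ summing to the identity, so $g(\gamma(s))=\sum_j g(\lambda^{(j)}(s))P_j(s)$ for every scalar function $g$ defined near the joint spectrum; taking $g=f_t$ and interchanging the finite sum with the mollifying integral yields
\[
f_t(\gamma(s))=\int_{\mathbb R^d}\phi(z)\,f(\gamma_z(s))\,d\mu(z).
\]
Because $\phi\ge0$ and integrating a self-adjoint-matrix-valued function against a nonnegative scalar weight preserves positive semidefiniteness, we conclude $f_t(\gamma(s_1))\le f_t(\gamma(s_2))$ for $s_1\le s_2$, i.e.\ \eqref{monotoneCondition} holds for $f_t$ on $K$.

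I expect the one point demanding a little care to be the interchange in the last display — that applying the scalar function $f_t$ to the commuting tuple $\gamma(s)$ really returns the $\phi$-average of $f$ applied to the translated tuples $\gamma_z(s)$ — but in finite dimensions this is just the elementary fact about simultaneous diagonalisation invoked above, so it is bookkeeping rather than a genuine difficulty. The only structural subtlety is the quantifier order: the threshold for ``sufficiently small $t$'' is determined by $\delta$ and $R$ alone, hence is independent of $n$ and of the curve $\gamma$, which is precisely what the claim asserts.
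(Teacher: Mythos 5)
Your argument is correct and follows essentially the same route as the paper: shrink $t$ so that $K$ plus the (scaled) support of $\phi$ stays inside $E$, observe that each translated curve $\gamma(s)-tz$ is again an admissible monotone path in $CSAM^d_n(E)$ with the same derivative, and conclude by averaging the resulting matrix inequalities against the nonnegative weight $\phi$. The only difference is that you spell out, via simultaneous diagonalisation, why $f_t(\gamma(s))=\int\phi(z)f(\gamma(s)-tz)\,d\mu(z)$, a step the paper's proof uses implicitly.
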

\begin{proof}
	Let $t$ be small enough so that  
	$K + \text{Supp }\phi_t \subset E.$
	If $a \leq b,$
	$$f_t(\gamma(b)) - f_t(\gamma(a)) = \int_{\text{Supp }\phi_t} \phi_t(\xi)
	(f(\gamma(b)-\xi) - f(\gamma(a)-\xi))d\mu(\xi)\geq 0$$ since
	$$f(\gamma(b)-\xi) - f(\gamma(a)-\xi) \geq 0$$
	 because
	$\gamma_\xi =  \gamma - \xi$ is itself a path in $CSAM^d_n(E)$ such that $\gamma_\xi'(x)\geq 0$ and so by the definition of
	local matrix monotonicity, we are done.
\end{proof}

%


To show that the $f_t$ have an analytic limit as $t\rightarrow 0$ we will show that they form a normal family.
\begin{theorem}[P. {\cite[Theorem 4.4]{pascoeLOMC}} ]\label{oldresult1}
Let $F \subset \mathbb{R}^d$ be open. Let $p\in F.$
There are absolute constants $q^p_k$ so that $q^p_k$ is $O(c^k)$ for some $c>0$ so that for every
differentiable locally matrix monotone function
$f: F \rightarrow \mathbb{R},$
$$\frac{|f^{(k)}(p)[\vec{z}]|}{k!} \leq  q^p_k\|\vec{z}\|^k f'(p)[\vec{1}].$$
(Here $f^{(k)}(p)[\vec{z}] = \frac{d^k}{dt^k} f(p + t\vec{z}).$)

Namely, if $F=\{a|\|a-x\|\leq \delta\},$ there is are constants $c$ and $D$ such that
for every 
differentiable locally matrix monotone function $f:F \rightarrow \mathbb{R},$
$f$ analytically continues to all $z \in \mathbb{C}^d$ such $\|z-x\|\leq c /\delta$ and 
$$|f(z)-f(x)|\leq D\frac{\|z\|}{1-\frac{c}{\delta}\|z\|}f'(x)[\vec{1}].$$
\end{theorem}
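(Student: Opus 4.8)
The plan is to reduce everything to the classical one–variable Loewner (Nevanlinna) theory by restricting $f$ to complex lines through $p$ in nonnegative directions, and then to upgrade from nonnegative directions to arbitrary complex directions by a polarization estimate. Since $f$ is analytic on $F$ by Theorem~\ref{amyloewthm}, the map $\vec z\mapsto f^{(k)}(p)[\vec z]=\sum_{|\alpha|=k}\binom{k}{\alpha}(\partial^{\alpha}f)(p)\,\vec z^{\alpha}$ is a homogeneous polynomial of degree $k$ on $\mathbb{R}^{d}$, which extends to one on $\mathbb{C}^{d}$; by homogeneity it suffices to prove the bound when $\|\vec z\|=1$. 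Set $r_{0}=\dist(p,\mathbb{R}^{d}\setminus F)>0$ (distance for $\|\cdot\|_{\infty}$), which equals $\delta$ in the cube version of the statement. We shall produce $q^{p}_{k}=(C')^{k}r_{0}^{-(k-1)}=O((C'/r_{0})^{k})$ with $C'$ an absolute constant.

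First I would treat a strictly positive direction: $0<\vec z$ with $\|\vec z\|_{\infty}=1$. Put $g(w)=f(p+w\vec z)$. Because every entry of $\vec z$ is positive, $w\in\Pi$ forces $p+w\vec z\in\Pi^{d}$, so Theorem~\ref{amyloewthm} makes $g\colon\Pi\to\overline{\Pi}$ a one–variable Pick function; moreover $p+w\vec z\in F$ for real $|w|<r_{0}$, where $g$ agrees with the real-analytic function $f(p+w\vec z)$. In the Nevanlinna representation $g(w)=\alpha+\beta w+\int(\tfrac1{t-w}-\tfrac{t}{1+t^{2}})\,d\nu(t)$ with $\beta\ge0$ and $\nu\ge0$, the Stieltjes inversion formula together with the fact that $g$ is real-valued on $(-r_{0},r_{0})$ forces $\nu|_{(-r_{0},r_{0})}=0$, so $\dist(0,\operatorname{supp}\nu)\ge r_{0}$. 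Differentiating the representation then gives, for every $k\ge1$,
\[
\frac{|g^{(k)}(0)|}{k!}\;\le\;\int\frac{d\nu(t)}{|t|^{k+1}}\;\le\;\frac{1}{r_{0}^{k-1}}\int\frac{d\nu(t)}{t^{2}}\;=\;\frac{g'(0)-\beta}{r_{0}^{k-1}}\;\le\;\frac{g'(0)}{r_{0}^{k-1}}.
\]
Now $g^{(k)}(0)=f^{(k)}(p)[\vec z]$, and $g'(0)=f'(p)[\vec z]=\sum_{j}z_{j}\,\partial_{j}f(p)\le\sum_{j}\partial_{j}f(p)=f'(p)[\vec{1}]$, using that each $\partial_{j}f(p)\ge0$ (monotonicity in the $j$-th coordinate) and $0\le z_{j}\le1$. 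Hence $|f^{(k)}(p)[\vec z]|/k!\le r_{0}^{-(k-1)}f'(p)[\vec{1}]$ for every such $\vec z$, and therefore for every $\vec z\in[0,1]^{d}$ by continuity.

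Next I would pass to arbitrary complex directions. The polynomial $P(\vec z):=f^{(k)}(p)[\vec z]/k!$ is homogeneous of degree $k$ and is bounded by $M:=r_{0}^{-(k-1)}f'(p)[\vec{1}]$ on $[0,1]^{d}$. By a standard polarization/Bernstein–Walsh estimate — write a vector in the closed unit polydisc as a combination of at most four vectors of $[0,1]^{d}$, expand $P$ into $4^{k}$ terms, and bound the associated symmetric $k$–linear form by the polarization identity — one obtains $|P(\vec z)|\le (C')^{k}M$ whenever $\|\vec z\|_{\infty}\le1$, with $C'$ absolute; homogeneity upgrades this to $|P(\vec z)|\le (C')^{k}\|\vec z\|^{k}M$ for all $\vec z\in\mathbb{C}^{d}$. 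This is precisely the claimed inequality with $q^{p}_{k}=(C')^{k}r_{0}^{-(k-1)}$. Finally, the Taylor series of $f$ at $p$, namely $f(z)=\sum_{k\ge0}\tfrac1{k!}f^{(k)}(p)[z-p]$, has $k$-th term bounded by $(C')^{k}r_{0}^{-(k-1)}\|z-p\|^{k}f'(p)[\vec{1}]$, so it converges — and furnishes the analytic continuation — for $\|z-p\|<r_{0}/C'$, and summing the resulting geometric majorant yields $|f(z)-f(x)|\le \frac{D\,\|z-x\|}{1-(c/\delta)\|z-x\|}\,f'(x)[\vec{1}]$ (take $p=x$, $r_{0}=\delta$, and $D=c=C'$).

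The step I expect to be the main obstacle is securing \emph{uniformity} over all differentiable locally matrix monotone $f$: a priori the continuation might exist only on an $f$-dependent neighbourhood, and the argument closes only because Theorem~\ref{amyloewthm} forces \emph{every} such $f$ to be analytic on all of $F$, which pins down the fixed interval $(-r_{0},r_{0})$ on which the line restrictions $g$ are real-analytic, hence the uniform separation $\dist(0,\operatorname{supp}\nu)\ge r_{0}$. The polarization step is the other place needing care, since it is where the exponential rate $c$ — equivalently the radius $\sim r_{0}/c$ of the guaranteed continuation — is actually controlled; but any bound of the shape $(C')^{k}$ there is enough, so that part, like the rest, is routine one–variable Loewner/Nevanlinna theory.
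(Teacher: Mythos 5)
The note you are annotating does not prove this theorem at all: it is imported as a black box from \cite[Theorem 4.4]{pascoeLOMC}, so there is no in-paper proof to compare yours against. Taken on its own terms, your argument is a legitimate \emph{alternative} derivation that deduces the quantitative coefficient bounds from the qualitative continuation theorem of Agler--McCarthy--Young (Theorem \ref{amyloewthm}), rather than extracting them directly from matrix monotonicity as the cited source does. The one-variable core is sound: for a strictly positive direction $\vec z$ the slice $g(w)=f(p+w\vec z)$ is a Pick function, real-analytic on $(-r_0,r_0)$ with $r_0=\dist_\infty(p,\mathbb{R}^d\setminus F)$, so its Nevanlinna measure vanishes there, giving $|g^{(k)}(0)|/k!\le g'(0)\,r_0^{-(k-1)}$, and $g'(0)=\sum_j z_j\partial_jf(p)\le f'(p)[\vec 1]$ since each $\partial_jf(p)\ge 0$. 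The upgrade from $[0,1]^d$ to the complex polydisc with a constant $(C')^k$ is standard, though as written the polarization identity evaluates $P$ at points of $k\cdot[-1,1]^d$ rather than $[0,1]^d$; doing the extension one coordinate at a time with Chebyshev/Bernstein--Walsh growth on an interval is the cleaner way to make that step airtight. Since only the shape $(C')^k$ matters, this is a presentational rather than a substantive issue.

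There is, however, one genuine hypothesis gap: the statement (and its use in \cite{pascoeLOMC}) is for \emph{differentiable} locally matrix monotone functions, while Theorem \ref{amyloewthm}, on which your entire argument rests, assumes $f\in C^1$. You never close that gap, and it is not automatic --- classical L\"owner gives analyticity of the one-variable coordinate slices, but separate regularity does not by itself yield joint $C^1$ smoothness, which is exactly the kind of issue this whole note exists to circumvent. The gap is harmless for the present paper, because Theorem \ref{oldresult1} is applied only to the mollifications $f_t$, which are smooth; but as a proof of the theorem as stated it covers only the $C^1$ case. Your closing observation about uniformity is the right one, and your resolution is correct: the continuation radius is pinned by $r_0$ (hence by $\delta$ in the cube version) independently of $f$, and all coefficients are controlled by the single quantity $f'(p)[\vec 1]$, which is what makes the normal-family argument in the body of the note work.
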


Theorem \ref{oldresult1} implies $(f_t)_{0<t<\epsilon}$ form a normal family if
for any basic open set in the domain of $f$ we can find an $a_0$
	such that $\limsup_{t\rightarrow 0} f_t'(a_0)[\vec{1}]<\infty.$
\begin{claim}
	Let $x\in K.$
	For any $\delta >0,$
	there is an $a_0 \in K$ such that $\|x - a_0\|< \delta$ and
	$\limsup_{t\rightarrow 0} f_t'(a_0)[\vec{1}]<\infty.$
\end{claim}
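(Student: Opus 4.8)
The plan is to recognize the directional derivative $f_t'(\,\cdot\,)[\vec 1]$ as the mollification of a \emph{fixed} positive measure, and then to invoke the differentiation theory of measures. The key observation is that, although $f$ itself need not be differentiable, its distributional derivative in the direction $\vec 1$ is a positive Radon measure: by \eqref{monoone} the restriction of $f$ to each line in the direction $\vec 1$ is nondecreasing, so a Fubini argument in coordinates adapted to $\vec 1$ together with one-variable Stieltjes integration by parts on the monotone slices shows that $\langle \partial_{\vec 1}f,\psi\rangle \ge 0$ for every $0\le\psi\in C_c^\infty$; hence $\partial_{\vec 1}f$ is a positive Radon measure.

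Concretely, fix an open $K'$ with $\overline K\subset K'$ and $\overline{K'}\subset E$, set $\nu:=\partial_{\vec 1}f$ on $K'$ (a positive Radon measure by the above), and let $R$ be such that $\operatorname{supp}\phi\subset\{\|x\|\le R\}$. For $t$ small enough that $\overline K+\operatorname{supp}\phi_t\subset K'$, the function $f_t$ agrees near $\overline K$ with the genuine mollification $\phi_t*f$, so differentiating under the integral sign (legitimate since $\phi_t$ is smooth with compact support) and moving the derivative onto $\phi_t$, then integrating by parts along the $\vec 1$-direction, yields for every $a\in K$
\[
	f_t'(a)[\vec 1]\;=\;\big((\partial_{\vec 1}\phi_t)*f\big)(a)\;=\;(\phi_t*\nu)(a)\;=\;\int\phi_t(a-\eta)\,d\nu(\eta)\;\ge\;0.
\]
Since $0\le\phi_t\le\|\phi\|_\infty\,t^{-d}$ and $\phi_t$ is supported in $\{\|x\|\le Rt\}$,
\[
	0\le(\phi_t*\nu)(a)\le\|\phi\|_\infty\,t^{-d}\,\nu\big(\{\eta:\|a-\eta\|\le Rt\}\big)\le C_{\phi,R}\,\frac{\nu(\{\eta:\|a-\eta\|\le Rt\})}{\mu(\{\eta:\|a-\eta\|\le Rt\})},
\]
where $C_{\phi,R}$ is independent of $a$ and of $t$.

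By the differentiation theorem for Radon measures, the upper density $\limsup_{r\to 0}\nu(\{\|a-\cdot\|\le r\})/\mu(\{\|a-\cdot\|\le r\})$ is finite for $\mu$-a.e.\ $a\in K'$; combining this with the displayed estimate gives $\limsup_{t\to 0}f_t'(a)[\vec 1]<\infty$ for $\mu$-a.e.\ $a\in K'$. Since $x\in K$ and $K$ is open, $\{a:\|a-x\|<\delta'\}\subset K$ for some $\delta'\in(0,\delta)$, and this set has positive Lebesgue measure, so it contains a point $a_0$ with $\limsup_{t\to 0}f_t'(a_0)[\vec 1]<\infty$ and $\|x-a_0\|<\delta$, as required.

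The argument is largely a repackaging of standard real analysis, so the only points requiring care are: (i) the zero-extension of $f$ off $E$, which is handled by passing to $K'$ and taking $t$ small so that $f_t$ coincides with a true mollification of $f$ near $\overline K$; and (ii) the fact that $\partial_{\vec 1}f$ is a positive measure, which uses only the elementary consequence \eqref{monoone} of local matrix monotonicity and not the deeper one-variable L\"owner theory. Everything downstream---normality of $(f_t)$ via Theorem~\ref{oldresult1} and convergence to $f$---then proceeds as indicated in the text.
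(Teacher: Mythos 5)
Your proof is correct and follows essentially the same route as the paper: both identify the distributional derivative of $f$ in the direction $\vec{1}$ as a positive Radon measure, recognize $f_t'(\cdot)[\vec{1}]$ as the mollification $\phi_t$ of that measure, and invoke the differentiation theorem for Radon measures (the paper cites Folland, Theorem 3.22) to pick a nearby point $a_0$ of finite density. The only minor differences are cosmetic: the paper constructs the measure as a weak-$*$ limit of the difference-quotient measures $\rho_\epsilon$ with an explicit total-variation bound, whereas you obtain it directly from positivity of the distribution, and your bound of $\phi_t*\nu$ by a constant times the upper symmetric density is stated somewhat more carefully than the paper's assertion that the limsup equals the density.
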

\begin{proof}
Let $\hat{K}= \{a|\|a-x\|\leq \delta\}.$
Note it is sufficient to prove the claim for all sufficiently small values of
$\delta$ so that $\hat{K} \subset K.$
Let $\rho$ be the weak limit of the measures
$$\rho_\epsilon(x) = \frac{f(x + \epsilon \vec{1}) - f(x)}{\epsilon} d\mu$$ as $\epsilon \rightarrow 0$
	taken in the dual of $C_0(\hat{K})$ which exists because $f$ is locally bounded since it is monotone.
	Thus, the total variation is bounded as follows:
		\begin{align*}
		|\rho_\epsilon| & =  \frac{1}{\epsilon}\left[
		\int_{
		\hat{K}+
		\epsilon
		\vec{1}
		}
		f d\mu - \int_{\hat{K}} f d\mu \right]\\
		&\leq  \frac{1}{\epsilon}\int_{(\hat{K}+\epsilon\vec{1}) \setminus K
		\cup \hat{K} \setminus (K+\epsilon\vec{1})} |f| d\mu \\
		 &\leq \frac{
		 \sup_{(\hat{K}+\epsilon\vec{1}) \setminus K
		\cup \hat{K} \setminus (K+\epsilon\vec{1})} |f|
		 \mu((\hat{K}+\epsilon\vec{1}) \setminus \hat{K} \cup \hat{K} \setminus ( \hat{K}+\epsilon\vec{1}))}{\epsilon}\\
		 &\leq  \frac{
		 \max(|f(x+(\epsilon+\delta)\vec{1})|,|f(x-\delta\vec{1})|)
		 }{\epsilon}2d\delta^{d-1}\epsilon \\
		 & =   \max(|f(x+(\epsilon+\delta)\vec{1})|,|f(x-\delta\vec{1})|)2d\delta^{d-1}.
		\end{align*}	
	
	Note $f_t'(x)[\vec{1}] = \phi_t * \rho.$
	Now,  pick a point $a_0$ in the interior of $\hat{K}$
	so that the density of $\rho$ with
	respect to Lebesgue measure is finite. (Such a point exists by \cite[pg 99, Theorem 3.22]{fol})
	Note that $\limsup_{t\rightarrow 0} f_t'(a_0)[\vec{1}]$ is equal to the density of
	$\rho$ with respect to Lebesgue measure, so we are done.
\end{proof}

Thus, $f$ is analytic.


\printindex
\bibliography{references}
\bibliographystyle{plain}

\end{document}